\documentclass[11pt,reqno]{amsart}
\usepackage{amsmath,amsthm,amsfonts,amssymb,mathrsfs,bm,graphicx,stmaryrd}

\usepackage{mathtools}
\usepackage{dsfont}
\usepackage{multicol}
\usepackage[colorlinks=true,linkcolor=blue]{hyperref}
\hypersetup{bookmarksdepth=1}
\usepackage{enumitem}
\usepackage{bbm}
\usepackage{mathrsfs}
\usepackage{versions}
\usepackage{subcaption}
\usepackage{tikz}
\usepackage[letterpaper,hmargin=1.0in,vmargin=1.0in]{geometry}
\parskip 	\smallskipamount

\newcommand{\E}{\mathbb{E}}

\newcommand{\N}{\mathbb{N}}

\newcommand{\C}{\mathbb{C}}

\def\l{\left}
\def\r{\right}
\def\<{\langle}
\def\>{\rangle}

\newtheorem{theorem}{Theorem}[section]

\theoremstyle{remark}

\numberwithin{equation}{section}

\usepackage{palatino}

\title[Monotonicity of average singular values of gaussian matrices]{On the monotonicity of average singular values of complex Gaussian random matrices}
\author[Baslingker]{Jnaneshwar Baslingker}
\author[Dan]{Biltu Dan}

\begin{document}
\begin{abstract}
 We give an alternate proof of the fact that the average singular value of complex Gaussian random matrix decreases with dimension. Conjectured by Bandeira, Kennedy, and Singer \cite{BKS16}, this property was recently established by Hutn\'{\i}k \cite{H26}, for non-negative integer shape parameter $\alpha$. We also extend this monotonicity to real $\alpha\geq 0$. 
\end{abstract}
\address{Jnaneshwar Baslingker,  Department of Mathematics, University of Toronto, Canada.}
\email{j.baslingker@utoronto.ca}
\address{Biltu Dan,  Department of Mathematics \& Statistics, Indian Institute of Science Education and Research Kolkata, India.}
\email{bdan@iiserkol.ac.in}
\maketitle

\section{Introduction and main result}
Let $X$ be an $n\times m$ matrix with i.i.d. standard complex normal random variables $\mathcal{C}\mathcal{N}(0,1)$, where $m\geq n$. The joint distribution of the eigenvalues $\lambda_1\geq \lambda_2\geq \dots\geq \lambda_n$ of the complex Wishart matrix $W_{n,m}=XX^*$ is given by \cite{forrester2010log},
\begin{align}\label{eq: laguerre density}
    \frac{1}{Z_{n,m}} \exp{\left(-\sum\limits_{i=1}^n \lambda_i\right)}\prod_i\lambda_i^{(m-n)}\prod_{i<j}(\lambda_i-\lambda_j)^2.
\end{align}
Denote the singular values of $X$ (positive square roots of eigenvalues of $XX^*$) as 
\begin{align*}
    \sigma_1(X)\geq \sigma_2(X)\geq  \dots\geq\sigma_n(X). 
\end{align*}
Let $\alpha=m-n\geq 0$ be fixed. We are interested in the monotonicity of the quantity
\begin{align*}
    C_\alpha(n)=\frac{1}{n^{3/2}}\E\l[\sum\limits_{i=1}^n\sigma_i(X)\r], \quad n\in\N.
\end{align*}
It was conjectured by Bandeira, Kennedy and Singer \cite[Conjecture 8]{BKS16}, that $C_0(n+1)<C_0(n)$. It arises from their analysis of approximation ratios for the little Grothendieck
problem over the unitary group. The conjecture was recently proved by Hutn\'{\i}k \cite{H26} using a recurrence relation
of Abreu \cite{abreu2026recurrence}, obtained from the Christoffel–Darboux formula together with a Tur\'{a}n determinant for Laguerre polynomials. In fact, Hutn\'{\i}k proved that $C_\alpha(n+1)<C_\alpha(n)$ for any fixed $\alpha\in \N\cup \{0\}$. We provide a short and alternate proof of the above-mentioned monotonicity. We use a recurrence relation of the moments of the matrix $W_{n,m}$ from \cite{CMOS19}.

The moments of the matrix $W_{n,m}$, also referred to as Laguerre unitary ensemble, have been well studied \cite{CMOS19, HT03}. Let 
\begin{align*}
    Q_{k,\alpha}(n)=\E\l[\text{Tr}(W_{n,m}^k)\r]=\E\l[\sum\limits_{i=1}^n\lambda_i^k\r],
\end{align*}
for all $k\in\C$, for which the moments exist. In particular we have,
\begin{align}
    C_\alpha(n)=\frac{1}{n^{3/2}}Q_{\frac{1}{2},\alpha}(n).
\end{align}

Although there is no underlying $X X^*$ matrix model for non-integer $\alpha$, the joint eigenvalue density \eqref{eq: laguerre density} remains well-defined for $\alpha\geq 0$ (see \cite{forrester2010log}). So $C_\alpha(n)$ can still be naturally defined for any real $\alpha \geq 0$ via the fractional moment of order $k = 1/2$. We extend the montonicity result of Hutnik \cite{H26} for non-negative integer parameter $\alpha$ to all real $\alpha\geq 0$.

We use the following recursion from \cite{CMOS19} (See the remark following Theorem $4.4$ of \cite{CMOS19}. The recursion can be proved using Theorem $4.4$ of \cite{CMOS19} and Equation $2.2$ of \cite{GMM91}). For fixed $\alpha\ge 0$ and $k\in \C$ with $\mathrm{Re}(k)>-\alpha-1$, write $Q_{k,\alpha}(n)=a_nR_n$, where $a_n=n(n+\alpha)$. Then
\begin{align}\label{eq: moment recursion}
    (k-1)(k+2)R_n=a_{n+1}R_{n+1}-(a_{n+1}+a_{n-1})R_n+a_{n-1}R_{n-1}.
\end{align}
\begin{theorem}\label{thm: monotonicity of avg. singular values}
Fix real $\alpha\geq 0$. For all $n\in\N$, we have $C_\alpha(n+1)<C_\alpha(n)$.
\end{theorem}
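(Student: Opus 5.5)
The plan is to turn the recursion \eqref{eq: moment recursion} at $k=\tfrac12$ into a simple second-order recursion for $Q_n:=Q_{\frac12,\alpha}(n)$. Then we run an induction on the ratios $r_n:=Q_n/Q_{n-1}$, comparing them with $\bigl(n/(n-1)\bigr)^{3/2}$.

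\emph{Step 1: second differences.} For $k=\tfrac12$ we have $(k-1)(k+2)=-\tfrac54$. Since $a_n=n(n+\alpha)$, a direct computation gives $a_{n+1}+a_{n-1}=2a_n+2$. Substituting $a_jR_j=Q_j$, the recursion \eqref{eq: moment recursion} becomes
\begin{align*}
Q_{n+1}-2Q_n+Q_{n-1}=\Bigl(2-\tfrac54\Bigr)R_n=\frac{3}{4}\,\frac{Q_n}{n(n+\alpha)},\qquad n\ge 1 .
\end{align*}
At $n=1$ the term $a_0R_0$ vanishes because $a_0=0$, so we use the convention $Q_0=0$. We have $Q_n>0$ for $n\ge1$, since it is the expectation of a sum of positive quantities. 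The requirement $\mathrm{Re}(k)>-\alpha-1$ holds for all $\alpha\ge0$.

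Dividing by $Q_n$ gives the ratio recursion
\begin{align*}
r_{n+1}=2-\frac{1}{r_n}+\frac{3}{4n(n+\alpha)} .
\end{align*}
The claim $C_\alpha(n+1)<C_\alpha(n)$ is exactly the statement $r_{n+1}<\bigl(\tfrac{n+1}{n}\bigr)^{3/2}$ for all $n\ge1$.

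\emph{Step 2: induction.} The base case is $n=1$. From the displayed identity with $Q_0=0$ we get $r_2=2+\frac{3}{4(1+\alpha)}\le \tfrac{11}{4}<2^{3/2}$.

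For the inductive step, assume $r_n<\bigl(\tfrac{n}{n-1}\bigr)^{3/2}$. Then $1/r_n>(1-1/n)^{3/2}$. Using also $\alpha\ge0$, we obtain
\begin{align*}
r_{n+1}<2-\Bigl(1-\tfrac1n\Bigr)^{3/2}+\frac{3}{4n^2}.
\end{align*}
It therefore suffices to prove the elementary inequality
\begin{align*}
(1+x)^{3/2}+(1-x)^{3/2}\ \ge\ 2+\tfrac34x^2,\qquad x=\tfrac1n\in(0,1].
\end{align*}
Here we allow equality, because the previous bound is already strict.

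\emph{Step 3: the elementary inequality.} Expand both powers binomially; the series converge absolutely on $[-1,1]$. The odd-order terms cancel, leaving
\begin{align*}
2\sum_{j\ge0}\binom{3/2}{2j}x^{2j}.
\end{align*}
The $j=0$ term gives $2$ and the $j=1$ term gives exactly $\tfrac34x^2$. For $j\ge2$, the product $\tfrac32\cdot\tfrac12\cdot(-\tfrac12)\cdots(\tfrac32-2j+1)$ contains exactly $2j-2$ negative factors, an even number. Hence $\binom{3/2}{2j}>0$ for all $j\ge 2$, and the remaining terms are nonnegative. This closes the induction and proves the theorem.

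\emph{Main obstacle.} The substantive point is Step 1: recognizing that at $k=\tfrac12$ the recursion collapses to a discrete convexity identity for $Q_n$ with the specific constant $\tfrac34$. This constant exactly matches the second-order Taylor coefficient of $x\mapsto x^{3/2}$, which is what makes the comparison with $n^{3/2}$ work. Once that is in place, the only care needed is the boundary convention at $n=1$ ($a_0=0$, $Q_0=0$) and checking the sign pattern of the binomial coefficients. The argument uses nothing specific to integer $\alpha$, so it covers all real $\alpha\ge0$.
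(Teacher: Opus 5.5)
Your proof is correct and is essentially the paper's argument: dividing the paper's identity $\Delta_\alpha(n)=(A_{n,\alpha}-B_n-1)C_\alpha(n)+B_n\Delta_\alpha(n-1)$ by $C_\alpha(n)$ gives exactly your ratio induction, and both close with the same inequality $(1+x)^{3/2}+(1-x)^{3/2}\ge 2+\tfrac34x^2$, which the paper only asserts while you prove it via the binomial series. The only other difference is the base case: the paper evaluates $Q_{\frac12,\alpha}(1)$ and $Q_{\frac12,\alpha}(2)$ from the ${}_3F_2$ formula (4.11) of \cite{CMOS19}, while you read $r_2=2+\frac{3}{4(1+\alpha)}$ off the recursion at $n=1$ using $a_0=0$; this agrees with the paper's explicit values, though the paper never states that the recursion holds at $n=1$, so that step deserves a one-line justification.
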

\begin{proof}
 Fix $\alpha \ge 0$. We use the recursion \eqref{eq: moment recursion} with $k=1/2$ to show that $C_\alpha(n)$ is decreasing in $n$.  Putting $k=1/2$ in \eqref{eq: moment recursion} we get
 \begin{align*}
    Q_{\frac{1}{2},\alpha}(n+1) &= \frac{\left(-\frac54+a_{n+1}+a_{n-1}\right)}{a_n}{ Q_{\frac{1}{2},\alpha}(n)}- Q_{\frac{1}{2},\alpha}(n-1).
\end{align*}
This implies
\begin{align}\label{eq: normalized recursion}
    C_\alpha(n+1)=A_{n,\alpha}C_\alpha(n)-B_{n}C_\alpha(n-1),
\end{align}
 where 
\begin{align*}
A_{n,\alpha}:=\l(2+\frac{3}{4n(n+\alpha)}\r)\l(\frac{n}{n+1}\r)^{3/2} \quad B_n:=\l(\frac{n-1}{n+1}\r)^{3/2}.
\end{align*}

Now define $\Delta_\alpha(n):=C_\alpha(n+1)-C_\alpha(n)$. Then using \eqref{eq: normalized recursion} we have
\begin{align*}
    \Delta_\alpha(n)&=(A_{n,\alpha}-1)C_\alpha(n)-B_{n}C_\alpha(n-1)\\
    &=(A_{n,\alpha}-B_n-1)C_\alpha(n)+B_n\Delta_\alpha(n-1).
\end{align*}
Note that $C_\alpha(n)> 0$ for all $n\in\N$. Hence to show $\Delta_\alpha(n)<0$ for all $n\in\N$, it is enough to show that $A_{n,\alpha}-B_n-1\leq 0$ and $\Delta_\alpha(1)<0$. Since $A_{n,\alpha}$ is decreasing in $\alpha$, to prove $A_{n,\alpha}-B_n-1\leq 0$, it is enough to show $A_{n,0}-B_n-1\leq0$. We have,
\begin{align*}
 A_{n,0}-B_n-1 =\l(\frac{n}{n+1}\r)^{3/2} \l(\l( 2+ \frac3{4n^2}\r)-\l( 1-\frac1n\r)^{3/2}-\l( 1+\frac1n\r)^{3/2}\r). 
\end{align*}
 Since $(1+x)^{3/2}+(1-x)^{3/2}\geq 2+\frac{3x^2}{4}$ for any $x\ge0$, we have that $A_{n,0}-B_n-1\le 0$ for all $n\in\N$.
From Equation $(4.11)$ of \cite{CMOS19} we have
\[
Q_{\frac12,\alpha}(n)
=
\frac{\left(\frac12+\alpha\right)!\,n(n+\alpha)}
{(1+\alpha)!}
\,{}_3F_2
\left(
\begin{matrix}
\frac12,\frac52,1-n\\
2,2+\alpha
\end{matrix}
;1
\right),
\]
where ${}_3F_2$ is the generalized hypergeometric function and $\left(r+\alpha\right)!:=\Gamma(r+\alpha+1)$ for any $r\ge 0$.
Hence
\begin{align*}
Q_{\frac12,\alpha}(1)
=
\frac{\left(\frac12+\alpha\right)!\,(1+\alpha)}
{(1+\alpha)!}
\,{}_3F_2
\left(
\begin{matrix}
\frac12,\frac52,0\\
2,2+\alpha
\end{matrix}
;1
\right)=
\frac{\left(\frac12+\alpha\right)!}{\alpha!},
\end{align*}
and
\begin{align*}
Q_{\frac12,\alpha}(2)&=
\frac{\left(\frac12+\alpha\right)!}{(1+\alpha)!}
\,2(2+\alpha)
\,{}_3F_2
\left(
\begin{matrix}
\frac12,\frac52,-1\\
2,2+\alpha
\end{matrix}
;1
\right)\\
&=
\frac{\left(\frac12+\alpha\right)!}{(1+\alpha)!}
\,2(2+\alpha)
\left(
1-\frac{5}{8(2+\alpha)}
\right).
\end{align*}
Therefore
  \begin{align*} \Delta_\alpha(1)&= C_\alpha(2)-C_\alpha(1)\\
  &= \frac{(\frac{1}{2}+\alpha)!}{(1+\alpha)!} \l(\l(\frac{1}{\sqrt{2}}-1\r)\alpha +\frac{11}{8\sqrt{2}} - 1\r)<0.\end{align*}
\end{proof}

\subsection*{Acknowledgement.}
  The second author acknowledges the support of Indian Institute of Science Education and Research Kolkata, through Start-up Grant and Department of Science and Technology (DST), India, through the INSPIRE Faculty Fellowship, IFA22-MA176.
 \bibliography{bibliography}

@article{abreu2026recurrence,
  title={A recurrence relation for the average singular value of a complex {G}aussian random matrix},
  author={Abreu, L. D.},
  journal={Complex Analysis and Operator Theory},
  volume={20},
  number={6},
  pages={159},
  year={2026},
  publisher={Springer}
}

@book{forrester2010log,
  title={Log-gases and random matrices (LMS-34)},
  author={Forrester, P. J.},
  year={2010},
  publisher={Princeton university press}
}

@article{H26,
    author = {{Hutník}, O.},
    title ={The Average Singular Value of a Complex {G}aussian Random Matrix Strictly Decreases with Dimension},
    note ={\url{
https://doi.org/10.48550/arXiv.2608.12147
}},
    year = {2026}
}

@article {BKS16,
    AUTHOR = {Bandeira, A. S. and Kennedy, C. and Singer, A.},
     TITLE = {Approximating the little {G}rothendieck problem over the
              orthogonal and unitary groups},
   JOURNAL = {Math. Program.},
  FJOURNAL = {Mathematical Programming},
    VOLUME = {160},
      YEAR = {2016},
    NUMBER = {1-2},
     PAGES = {433--475},
      ISSN = {0025-5610,1436-4646},
   MRCLASS = {68W25 (90C20 90C27)},
  MRNUMBER = {3555395},
MRREVIEWER = {Christiane\ Tammer},
       DOI = {10.1007/s10107-016-0993-7},
       URL = {https://doi.org/10.1007/s10107-016-0993-7},
}

@article {HT03,
    AUTHOR = {Haagerup, U. and Thorbj{\o}rnsen, S.},
     TITLE = {Random matrices with complex {G}aussian entries},
   JOURNAL = {Expo. Math.},
  FJOURNAL = {Expositiones Mathematicae},
    VOLUME = {21},
      YEAR = {2003},
    NUMBER = {4},
     PAGES = {293--337},
      ISSN = {0723-0869},
   MRCLASS = {46L54 (15A52 33C45 60E05 82B31)},
  MRNUMBER = {2022002},
MRREVIEWER = {Oleksiy\ Khorunzhiy},
       DOI = {10.1016/S0723-0869(03)80036-1},
       URL = {https://doi.org/10.1016/S0723-0869(03)80036-1},
}

@article {GMM91,
    AUTHOR = {Gupta, D. P. and Ismail, M. E. H. and Masson, D.
              R.},
     TITLE = {Associated continuous {H}ahn polynomials},
   JOURNAL = {Canad. J. Math.},
  FJOURNAL = {Canadian Journal of Mathematics. Journal Canadien de
              Math\'ematiques},
    VOLUME = {43},
      YEAR = {1991},
    NUMBER = {6},
     PAGES = {1263--1280},
      ISSN = {0008-414X,1496-4279},
   MRCLASS = {33C45},
  MRNUMBER = {1145588},
MRREVIEWER = {Mizan\ Rahman},
       DOI = {10.4153/CJM-1991-072-3},
       URL = {https://doi.org/10.4153/CJM-1991-072-3},
}

@article {CMOS19,
    AUTHOR = {Cunden, F. D. and Mezzadri, F. and O'Connell,
              N. and Simm, N.},
     TITLE = {Moments of random matrices and hypergeometric orthogonal
              polynomials},
   JOURNAL = {Comm. Math. Phys.},
  FJOURNAL = {Communications in Mathematical Physics},
    VOLUME = {369},
      YEAR = {2019},
    NUMBER = {3},
     PAGES = {1091--1145},
      ISSN = {0010-3616,1432-0916},
   MRCLASS = {60B20 (33C45)},
  MRNUMBER = {3975863},
       DOI = {10.1007/s00220-019-03323-9},
       URL = {https://doi.org/10.1007/s00220-019-03323-9},
}
 \bibliographystyle{plain}

\end{document}